\documentclass[12 pt]{amsart}

\newtheorem{theorem}{Theorem}[section]
\newtheorem{lemma}[theorem]{Lemma}
\newtheorem{proposition}[theorem]{Proposition}

\theoremstyle{definition}



\usepackage{amscd,amssymb}

\begin{document}

\title[Automorphisms and Dirichlet series]
{Automorphisms of polynomial algebras and Dirichlet series}

\author[Vesselin Drensky and Jie-Tai Yu]
{Vesselin Drensky and Jie-Tai Yu}
\address{Institute of Mathematics and Informatics,
Bulgarian Academy of Sciences,
1113 Sofia, Bulgaria}
\email{drensky@math.bas.bg}
\address{Department of Mathematics,
The University of Hong Kong,
Hong Kong SAR, China}
\email{yujt@hkucc.hku.hk}

\thanks
{The research of Vesselin Drensky was partially supported by Grant
MI-1503/2005 of the Bulgarian National Science Fund.}

\thanks
{The research of Jie-Tai Yu was partially supported by an RGC-CERG grant.}

\subjclass[2000] {11T06; 13B25; 05A15; 30B50; 16S10; 17A50.}
\keywords{Automorphisms, polynomial algebras, free associative
algebras, finite fields, coordinates,  Nielsen-Schreier varieties,
Dirichlet series, generating functions}

\begin{abstract}
Let ${\mathbb F}_q[x,y]$ be the polynomial algebra in two
variables over the finite field ${\mathbb F}_q$ with $q$ elements.
We give an exact formula and the asymptotics
for the number $p_n$ of automorphisms $(f,g)$ of
${\mathbb F}_q[x,y]$ such that $\max\{\text{deg}(f),\text{deg}(g)\}=n$.
We describe also the Dirichlet series generating function
\[
p(s)=\sum_{n\geq 1}\frac{p_n}{n^s}.
\]
The same results hold for the automorphisms of the free associative algebra
${\mathbb F}_q\langle x,y\rangle$. We have also obtained analogues for free
algebras with two generators in Nielsen -- Schreier varieties of algebras.
\end{abstract}

\maketitle

\section*{Introduction}

Our paper is devoted to the following problem. {\it Let ${\mathbb
F}_q[x,y]$ be the polynomial algebra in two variables over the
finite field ${\mathbb F}_q$ with $q$ elements.  We would like to
determine the number of $\mathbb F_q$-automorphisms $\varphi=(f,g)$
of ${\mathbb F}_q[x,y]$ satisfying}
$\deg(\varphi):=\max\{\text{deg}(f),\text{deg}(g)\}=n$. Here
$\varphi=(f,g)$ means that $f=\varphi(x)$, $g=\varphi(y)$.

Our consideration is motivated by Arnaud Bodin \cite{B}, who raised
the question  to determine the number of $\mathbb F_q$-automorphisms
$\varphi$ with $\deg(\varphi)\le n$.

In the sequel all automorphisms are $\mathbb F_q$-automorphisms.
The theorem of Jung -- van der Kulk \cite{J, K} states that
the automorphisms of the polynomial algebra $K[x,y]$ over any field $K$ are tame.
In other words the group $\text{Aut}(K[x,y])$ is generated by the subgroup
$A$ of affine automorphisms
\[
\alpha=(a_1x+b_1y+c_1,a_2x+b_2y+c_2),\quad a_i,b_i,c_i\in K,
\quad a_1b_2\not= a_2b_1,
\]
and the subgroup $B$ of triangular automorphisms
\[
\beta=(ax+h(y),by+b_1),\quad 0\not=a,b\in K, \quad b_1\in K,\quad h(y)\in K[y].
\]
The proof of van der Kulk \cite{K} gives that
$\text{Aut}(K[x,y])$ has the following nice structure,
see e.g. \cite{C}:
\[
\text{Aut}(K[x,y]) = A\ast_CB,\quad C=A\cap B,
\]
where $A\ast_CB$ is the free product of $A$ and $B$
with amalgamated subgroup $C=A\cap B$.
Using the canonical form of the elements of $\text{Aut}(K[x,y])$
we have calculated explicitly the number $p_n$ of
automorphisms of degree $n$:
\[
p_1=q^3(q-1)^2(q+1),
\]
\[
p_n=(q(q-1)(q+1))^2\sum \left(\frac{q-1}{q}\right)^kq^{n_1+\cdots+n_k},
\quad n>1,
\]
where the summation is on all ordered factorizations $n=n_1\cdots n_k$
of $n$, with $n_1,\ldots,n_k>1$.

It is natural to express the sequence $p_n$, $n=1,2,\ldots$,
in terms of its generating function. When the elements
$p_n$ of the sequence involve sums on the divisors of the index $n$
it is convenient to work with the Dirichlet series generating function,
i.e., with the formal series
\[
p(s)=\sum_{n\geq 1}\frac{p_n}{n^s}.
\]
For the Riemann zeta function $\zeta(s)$ the coefficients of
$\zeta^k(s)$ count the number of ordered factorizations $n=n_1\cdots n_k$ in $k$ factors.
(We have to take $(\zeta(s)-1)^k$ if we want to count only
factorizations with $n_i\geq 2$.) Similarly, the coefficients of the
$k$-th power $\rho^k(s)$ of the formal Dirichlet series
\[
\rho(s)=\sum_{n\geq 2}\frac{q^n}{n^s}
\]
are equal to $q^{n_1+\cdots+n_k}$ in the expression of $p_n$.
Hence $\rho(s)$ may be considered as a $q$-analogue of $\zeta(s)$,
although it does not satisfy many of the nice properties of the Riemann zeta function
(because the sequence $q^n$ is not multiplicative) and for $q>1$
is not convergent for any nonzero $s$ (because its coefficients grow faster than $n^s$).
We have found that
\[
p(s)=(q(q-1)(q+1))^2\left(\sum_{k\geq 0}\frac{q-1}{q}\rho^k(s)-\frac{1}{q+1}\right)
\]
\[
=(q(q-1)(q+1))^2\left(\frac{1}{1-\frac{q-1}{q}\rho(s)}-\frac{1}{q+1}\right)
\]
and have given an estimate for the growth of $p_n$. For $n\geq 2$
\[
(q-1)^3(q+1)^2q^{n+1}\leq p_n\leq (q-1)^3(q+1)^2q^{n+1}+(\log_2n)^{\log_2n}q^{n/2+8}.
\]
Hence for a fixed $q$ and any $\varepsilon>0$,
\[
p_n=(q-1)^3(q+1)^2q^{n+1}+{\mathcal O}(q^{n(1/2+\varepsilon)}).
\]
The main contribution
$(q-1)^3(q+1)^2q^{n+1}$ to $p_n$ comes from
the number of automorphisms of the form
\[
(a_1x+b_1y+c_1+a_1h(y),a_2x+b_2y+c_2+a_2h(y)),
\]
\[
(b_1x+(a_1+ab_1)y+c_1+a_1h(x+ay),b_2x+(a_2+ab_2)y+c_2+a_2h(x+ay)),
\]
where $a,a_i,b_i,c_i\in{\mathbb F}_q$, $a_1b_2\not=a_2b_1$,
\[
h(y)=h_ny^n+h_{n-1}y^{n-1}+\cdots+h_2y^2\in y^2{\mathbb F}_q[y],\quad h_n\not=0.
\]

Based on the above result, we  have also calculated explicitly the
number $l_n$ of coordinates of degree $n$ and obtained its Dirichlet
series generating function, i.e., with the formal series
\[
l(s)=\sum_{n\geq 1}\frac{l_n}{n^s}.
\]

By the theorem of Czerniakiewicz and Makar-Limanov \cite{Cz, ML}
for the tameness of the automorphisms of $K\langle x,y\rangle$ over any field $K$
and the isomorphism $\text{Aut}(K[x,y])\cong \text{Aut}(K\langle x,y\rangle)$
which preserves the degree of the automorphisms
we derive immediately that the same results hold for the number of automorphisms of
degree $n$ of the free associative algebra
${\mathbb F}_q\langle x,y\rangle$.

It is easy to obtain an analogue of the formula
for the number of automorphisms $p_n$ of degree $n$ for free algebras with two
generators over ${\mathbb F}_q$ if the algebra satisfies the Nielsen -- Schreier
property. Examples of such algebras
are the free Lie algebra and the free anti-commutative algebra
(where $p_1=q(q-1)^2(q+1)$ and $p_n=0$ for $n>1$), free nonassociative algebras and
free commutative algebras.

In a forthcoming paper we are going to give an algebraic geometrical analogue
of the main results of the present paper for infinite fields.

\section{Canonical forms of automorphisms}

The group $G$ is the free product of its subgroups $A$ and $B$
with amalgamated subgroup $C=A\cap B$ (notation $G=A\ast_CB$), if
$G$ is generated by $A$ and $B$ and if for any $a_1,\ldots,a_{k+1}\in A$,
$b_1,\ldots,b_k\in B$, $k\geq 1$,
such that $a_2,\ldots,a_k,b_1,\ldots,b_k$ do not belong to $C$, the product
$g=a_1b_1\cdots a_kb_ka_{k+1}$ does not belong to $C$.

For the following description of $G=A\ast_CB$ see e.g.
\cite{MKS}, p. 201, Theorem 4.4 and its corollaries.

\begin{lemma}\label{description of free product}
Let $G=A\ast_CB$, $C=A\cap B$, and let
\[
A_0=\{1,a_i\in A\mid i\in I\},\quad
B_0=\{1,b_j\in B\mid j\in J\}
\]
be, respectively, left coset representative systems for $A$ and $B$ modulo $C$.
Then each $g\in G$ can be presented in a unique way in the form
\[
g=g_1\cdots g_kc,
\]
where $1\not=g_i\in A_0\cup B_0$, $i=1,\ldots,k$,
$g_i,g_{i+1}$ are neither both in $A_0$, nor both in $B_0$, $c\in C$.
\end{lemma}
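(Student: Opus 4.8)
The plan is to prove the two halves of the assertion --- existence and uniqueness of the expression $g=g_1\cdots g_kc$ --- separately, deriving everything from the defining condition of $G=A\ast_CB$ rather than merely quoting \cite{MKS}. For existence, since $G$ is generated by $A\cup B$ I write $g$ as a finite product of elements of $A$ and $B$ and merge each maximal run of consecutive factors from the same subgroup into one element (the product stays inside that subgroup), obtaining an alternating product $x_1\cdots x_n$ with the $x_i$ alternately in $A$ and $B$. I then normalize left to right: decompose $x_1=g_1c_1$ with $g_1$ in the coset system $A_0$ or $B_0$ and $c_1\in C$; since $c_1\in C$ lies in the subgroup containing $x_2$, the product $c_1x_2$ stays in that subgroup and may be rewritten as $g_2c_2$, and so on, pushing the $C$-part rightward until $g=g_1\cdots g_nc_n$ with $c_n\in C$ and each $g_i\in A_0\cup B_0$. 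Finally I delete every $g_i=1$; each deletion makes its two neighbours fall in the same subgroup, so I re-merge and repeat. This terminates because the number of syllables strictly drops, and it yields a reduced expression of the required shape.

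The crux is uniqueness, and I would first isolate the key lemma: \emph{any nonempty alternating product $h_1\cdots h_n$, $n\ge1$, with each $h_i\in(A_0\cup B_0)\setminus\{1\}$ and consecutive $h_i$ in different subgroups, does not lie in $C$.} A nontrivial left coset representative is never in $C$, so every $h_i\notin C$; padding the word with $a_1=1$ and/or $a_{k+1}=1$ so that it fits the template $a_1b_1\cdots a_kb_ka_{k+1}$ of the defining condition (whose only hypothesis is that the interior syllables avoid $C$) shows $h_1\cdots h_n\notin C$, hence $\ne1$. Granting this, suppose $g_1\cdots g_kc=g_1'\cdots g_m'c'$ are two reduced expressions, and induct on $k+m$. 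The case $k=0$ forces $m=0$ (otherwise $g_1'\cdots g_m'=c(c')^{-1}\in C$ contradicts the key lemma), giving $c=c'$. For $k,m\ge1$ I compare the leading syllables by left-multiplying the identity by $g_1^{-1}$, obtaining $g_2\cdots g_kc=g_1^{-1}g_1'\cdots g_m'c'$. If $g_1=g_1'$ the left factor cancels and induction applies. Otherwise $g_1^{-1}g_1'\notin C$ (the syllables lie either in different subgroups, or in the same subgroup but different cosets); in either case rearranging the equality as $(\text{one side})^{-1}\cdot(\text{other side})=1$ produces, after absorbing the outer $C$-elements $c^{-1}$ and $c'$ into the end syllables, a reduced nonempty alternating word equal to $1$, contradicting the key lemma. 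Hence $g_1=g_1'$ always, and induction finishes the proof.

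I expect the delicate point to be this junction analysis: checking that after cancellation the two factors meet in \emph{different} subgroups, so that the concatenated word is genuinely reduced, and that prepending $c^{-1}$ and appending $c'$ does not push an end syllable into $C$ (it does not, since $g_kc\notin C$ exactly when $g_k\notin C$). An alternative that sidesteps this bookkeeping is van der Waerden's permutation method: let $\Omega$ be the set of reduced sequences, define actions of $A$ and $B$ on $\Omega$ by permutations that agree on the common subgroup $C$, extend them to a $G$-action, and read off uniqueness by applying $g$ to the empty sequence; there the subtlety migrates into verifying that the two partial actions coincide on $C$.
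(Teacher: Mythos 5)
Your proof is correct, but it takes a genuinely different route from the paper: the paper offers no proof of this lemma at all, simply referring to \cite{MKS}, p.~201, Theorem~4.4 and its corollaries, whereas you derive the normal form directly from the paper's own defining condition of $G=A\ast_CB$. Your existence step (alternate, normalize left to right pushing the $C$-part rightward, delete trivial syllables and iterate, with termination by syllable count) and your uniqueness step (reduce everything to the key lemma that a nonempty reduced alternating word cannot lie in $C$, which is exactly the defining condition after padding with trivial end syllables $a_1=1$, $a_{k+1}=1$) constitute the standard argument, essentially the one in \cite{MKS}; what your version buys is self-containment and the explicit observation that the paper's definition of the amalgamated product is precisely what makes the normal form work, while the paper's citation buys brevity. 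One point you should tighten: your key lemma is stated only for syllables that are nontrivial coset representatives, but in the uniqueness step the rearranged word $c^{-1}g_k^{-1}\cdots g_2^{-1}\,(g_1^{-1}g_1')\,g_2'\cdots g_m'c'$ has syllables --- inverses of representatives, the merged junction product, and the end syllables with the outer $C$-factors absorbed --- that are in general not representatives, so the lemma as stated does not literally apply. The fix is cosmetic: your padding argument never uses representativeness, only that each syllable avoids $C$ and that consecutive syllables lie in different subgroups, so it actually proves the stronger statement that any alternating product of elements of $(A\setminus C)$ and $(B\setminus C)$ lies outside $C$; state the key lemma in that generality and the proof is complete.
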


We write the automorphisms
of $K[x,y]$ over any field $K$ as functions. If
$\varphi=(f_1(x,y),g_1(x,y))$, $\psi=(f_2(x,y),g_2(x,y))$, then
$\varphi\circ\psi(u)=\varphi(\psi(u))$, $u\in K[x,y]$, and hence
\[
\varphi\circ\psi=(f_2(f_1(x,y),g_1(x,y)),g_2(f_1(x,y),g_1(x,y))).
\]
The following presentation of the automorphisms of $K[x,y]$ is
well known, see e.g. Wright \cite{Wr}.
We include the proof for self-containess of the exposition.

\begin{proposition}\label{canonical form of automorphisms}
Define the sets of automorphisms of $K[x,y]$
\[
A_0=\{\iota=(x,y),\quad \alpha=(y,x+ay)\mid a\in K\},
\]
\[
B_0=\{\beta=(x+h(y),y)\mid h(y)\in y^2K[y]\}.
\]
Every automorphism $\varphi$ of $K[x,y]$ can be presented in a unique
way as a composition
\[
\varphi=(f,g)=\alpha_1\circ\beta_1\circ\alpha_2\circ\beta_2\circ
\cdots\circ\alpha_k\circ\beta_k\circ\lambda,
\]
where $\alpha_i\in A_0$, $\alpha_2,\ldots,\alpha_k\not=\iota$,
$\beta_i\in B_0$, $\beta_1,\ldots,\beta_k\not=\iota$, $\lambda\in A$.
If $\beta_i=(x+h_i(y),y)$ and $\text{\rm deg}(h_i(y))=n_i$,
then the degree of $\varphi$
\[
n=\text{\rm deg}(\varphi)=\max\{\text{\rm deg}(f),\text{\rm deg}(g)\}=n_1\cdots n_k
\]
is equal to the product of the degrees of $\beta_i$.
\end{proposition}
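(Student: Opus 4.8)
The plan is to read the canonical form straight off the amalgamated free product decomposition $\text{Aut}(K[x,y])=A\ast_CB$ of Jung--van der Kulk, feed it into Lemma~\ref{description of free product}, and then obtain the degree formula by an inside-out induction on the syllable length. First I would make the three groups explicit: $A$ is the affine group, $B$ the triangular group $(ax+h(y),by+b_1)$, and $C=A\cap B$ consists of the affine triangular maps, i.e. those affine $\gamma$ whose second coordinate $\gamma(y)$ lies in $Ky+K$ (no $x$-term). I would then verify that $A_0$ and $B_0$ are transversals for $C$. For $A_0$: since every element of a left coset $\phi C$ has second coordinate of the form $b\,\phi(y)+b_1$, the coset is detected by $\phi(y)$ up to affine rescaling, so $\phi\in\iota\,C$ exactly when $\phi(y)$ has no $x$-term, and otherwise $\phi\in\alpha\,C$ for a unique $\alpha=(y,x+ay)$. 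For $B_0$: a general $\beta=(ax+h(y),by+b_1)$ is normalized modulo $C$ to a unique $(x+h_0(y),y)$ with $h_0\in y^2K[y]$, by rescaling and clearing the affine part of the second coordinate and the degree-$\le 1$ part of $h$. Over $\mathbb F_q$ the count $|A/C|=q+1=|A_0|$ is a useful sanity check.

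With the transversals in hand, Lemma~\ref{description of free product} gives each $\varphi$ a unique reduced expression $g_1\circ\cdots\circ g_m\circ c$ with the $g_i$ nontrivial, alternating between $A_0$ and $B_0$, and $c\in C$. I would repackage this into the asserted shape by a reversible dictionary: prepend $\alpha_1=\iota$ if the word begins with a $B_0$-letter, and absorb the tail into $\lambda\in A$ --- if the reduced word ends in a $B_0$-letter $\beta_k$ set $\lambda=c$, and if it ends in an $A_0$-letter $\alpha'$ set $\lambda=\alpha'\circ c$. Since $A_0$ is a transversal, the factorization $\lambda=\alpha'\circ c$ with $\alpha'\in A_0$, $c\in C$ is unique, so this passage between the two shapes is a bijection and uniqueness of the canonical form descends from the Lemma. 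The conditions $\beta_i\ne\iota$ and $\alpha_2,\dots,\alpha_k\ne\iota$ are precisely the nontriviality of the reduced letters.

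For the degree formula I would first strip the affine maps at both ends: composing with an affine automorphism on either side is an invertible degree-preserving substitution, so $\deg\varphi=\deg(\beta_1\circ\alpha_2\circ\cdots\circ\alpha_k\circ\beta_k)$. Writing $\tau_k=\beta_k$ and $\tau_j=\beta_j\circ\alpha_{j+1}\circ\tau_{j+1}$, I would prove by downward induction the sharper statement that $\tau_j=(F_j,G_j)$ has $\deg F_j=\prod_{i=j}^k n_i$ and $\deg G_j=\prod_{i=j}^{k-1}n_i$, with the top homogeneous parts of both $F_j$ and $G_j$ equal to nonzero scalar multiples of the corresponding power of $y$ (the base case is immediate since $h_k\in y^2K[y]$ forces $n_k\ge 2$). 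In the inductive step, applying $\alpha_{j+1}$ is a linear change of variables, turning those pure $y$-power leading forms into powers of $x+a_{j+1}y$ of the same degree; applying $\beta_j$ then substitutes $x\mapsto x+h_j(y)$, and one tracks the top $x$-power through this substitution.

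The one genuinely delicate point --- and the step I expect to be the main obstacle --- is showing that no cancellation occurs in this last substitution. A monomial $x^ay^b$ is sent to something of degree $an_j+b$ with pure-$y$ leading term, so I must argue that the maximal value of $an_j+b$ over the monomials present is attained uniquely. Because $n_j\ge 2$, one has $an_j+b\le D+a(n_j-1)$ with $D$ the total degree, so maximizing forces $a$ as large as possible; the unique monomial realizing the top $x$-power is the $x^{D}$ term, and every other monomial gives a value at most $D\,n_j-(n_j-1)<D\,n_j$. Here it is crucial that $\alpha_{j+1}\ne\iota$: the twist is exactly what produces a genuine $x^{D}$ term (coefficient nonzero) for the next substitution to act on, so reducedness of the word is what forces the degrees to multiply. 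Hence the leading term survives, $\deg\tau_j=\prod_{i=j}^k n_i$, and since $\deg F_1>\deg G_1$, evaluating at $j=1$ yields $\deg\varphi=\deg\tau_1=n_1\cdots n_k$ as claimed.
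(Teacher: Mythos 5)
Your proposal is correct and follows essentially the same route as the paper: you verify that $A_0$ and $B_0$ are left coset transversals modulo $C$, invoke Lemma~\ref{description of free product} and repackage the reduced word (absorbing the tail into $\lambda\in A$), and then prove the degree formula by an induction that tracks leading forms through the substitutions, using that the preceding twist $\alpha\not=\iota$ guarantees a genuine top $x$-power so the degrees multiply. The only organizational difference is that you strip the affine factors at both ends before the induction, whereas the paper carries the affine tail $\lambda$ through the induction via the pair of leading coefficients $(d_1,d_2)\not=(0,0)$; your write-up, which also spells out the no-cancellation step that the paper leaves implicit, is if anything slightly more detailed.
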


\begin{proof}
First we shall show that $A_0$ and $B_0$ are, respectively,
left coset representative systems for the group $A$ of affine automorphisms
and the group $B$ of triangular automorphisms modulo the intersection $C=A\cap B$.

Since $\iota$ is the identity automorphism, $\alpha=(y,x+ay)\not\in C$ and
\[
\alpha_1^{-1}\circ\alpha_2=(y,x+a_1y)^{-1}\circ(y,x+a_2y)
=(x,(-a_1+a_2)x+y)
\]
does not belong to $C$ when $a_1\not=a_2$,
to show the statement for $A_0$ it is sufficient to verify that
for any $\lambda\in A$ there exist $\alpha\in A_0$ and $\gamma\in C$ such that
$\lambda=\alpha\circ \gamma$. We choose $\alpha=\iota$ and $\gamma=\lambda$
if $\lambda\in C$ and have the presentation
\[
\lambda=(a_1x+b_1y+c_1,a_2x+b_2y+c_2)
\]
\[
=(y,x+\frac{b_2}{a_2}y)\circ ((b_1-\frac{a_1b_2}{a_2})x+a_1y+c_1,a_2y+c_2)
\]
when $\lambda\not\in C$ (and hence $a_2\not=0$). Similarly, for $B_0$
it is sufficient to use that $\beta=(x+h(y),y)\not\in C$
when $0\not=h(y)\in y^2K[y]$,
\[
\beta_1^{-1}\circ\beta_2=(x+h_1(y),y)^{-1}\circ(x+h_2(y),y)
=(x-h_1(y)+h_2(y),y)\not\in C
\]
for $h_1(y)\not=h_2(y)$ (because $h_1$ and $h_2$ have no monomilas of degree $<2$)
and to see that
\[
(ax+h(y),by+b_1)=(x+k(y),y)\circ(ax+h_1y+h_0,by+b_1)\in B_0\circ C,
\]
where $a,b,b_1\in K$, $a,b\not=0$, $h(y)=h_ny^n+\cdots+h_1y+h_0$ and
\[
k(y)=\frac{1}{a}(h_ny^n+\cdots+h_3y^3+h_2y^2).
\]
Lemma \ref{description of free product} gives that
every automorphism $\varphi$ of $K[x,y]$ has a unique presentation
\[
\varphi=(f,g)=\alpha_1\circ\beta_1\circ\alpha_2\circ\beta_2\circ
\cdots\circ\alpha_k\circ\beta_k\circ\alpha_{k+1}\circ\gamma,
\]
where $\alpha_i\in A_0$, $\beta_i\in B_0$,
$\beta_1,\alpha_2,\ldots,\alpha_k,\beta_k\not=\iota$, $\gamma\in C$. Since
$\lambda=\alpha_{k+1}\circ\gamma\in A$, we obtain the presentation.

Finally, if
\[
\lambda=(a_1x+b_1y+c_1,a_2x+b_2y+c_2),\quad \beta=(x+h(y),y),
\quad \alpha=(y,x+ay),
\]
$h=h_ny^n+\cdots+h_2y^2$, $h_n\not=0$, then the degree of
$\beta\circ\lambda$ and $\alpha\circ\beta\circ\lambda$ is $n$ and
homogeneous components of maximal degree of these automorphisms
are, respectively,
\[
\overline{\beta\circ\lambda}=(a_1h_ny^n,a_2h_ny^n),\quad
\overline{\alpha\circ\beta\circ\lambda}=(a_1h_n(x+ay)^n,a_2h_n(x+ay)^n).
\]
They are different from 0 because $(a_1,a_2)\not=(0,0)$.
If the homogeneous component of maximal degree of
$\psi=\alpha_i\circ\beta_i\circ\cdots\circ\alpha_k\circ\beta_k\circ\lambda$ is
\[
\overline{\psi}=(d_1(x+a_0y)^m,d_2(x+a_0y)^m),
\]
then for $\beta\circ\psi$ and $\alpha\circ\beta\circ\psi$ we obtain
\[
\overline{\beta\circ\psi}=(d_1h_n^my^{mn},d_2h_n^my^{mn}),
\]
\[
\overline{\alpha\circ\beta\circ\psi}=(d_1h_n^m(x+ay)^{mn},d_2h_n^m(x+ay)^{mn})
\]
and we apply induction.
\end{proof}

\

\section{The main results}

Now we give a formula for the number of automorphisms of degree $n$ for
${\mathbb F}_q[x,y]$ and the corresponding Dirichlet series generating function.

\begin{theorem}\label{explicit formula and Dirichlet series}
{\rm (i)} The number $p_n$ of automorphisms $\varphi=(f,g)$ of degree $n$ of
${\mathbb F}_q[x,y]$, i.e., such that
\[
n=\max\{\text{\rm deg}(f),\text{\rm deg}(g)\}
\]
is given by the formulas
\[
p_1=q^3(q-1)^2(q+1),
\]
\[
p_n=(q(q-1)(q+1))^2\sum \left(\frac{q-1}{q}\right)^kq^{n_1+\cdots+n_k},
\quad n>1,
\]
where the summation is on all ordered factorizations $n=n_1\cdots n_k$
of $n$, with $n_1,\ldots,n_k>1$.

{\rm (ii)} The Dirichlet series generating function $p(s)$ of the sequence
$p_n$, $n=1,2,\ldots$, is
\[
p(s)=\sum_{n\geq 1}\frac{p_n}{n^s}
=(q(q-1)(q+1))^2\left(\frac{1}{1-\frac{q-1}{q}\rho(s)}-\frac{1}{q+1}\right),
\]
where
\[
\rho(s)=\sum_{n\geq 2}\frac{q^n}{n^s}.
\]
\end{theorem}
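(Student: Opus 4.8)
The plan is to count automorphisms of a fixed degree $n$ directly from the unique canonical form supplied by Proposition~\ref{canonical form of automorphisms}, namely
\[
\varphi=\alpha_1\circ\beta_1\circ\cdots\circ\alpha_k\circ\beta_k\circ\lambda,
\]
and then to repackage the resulting count as a Dirichlet series. Since the presentation is unique, distinct admissible tuples $(\alpha_1,\beta_1,\ldots,\alpha_k,\beta_k,\lambda)$ give distinct automorphisms, so $p_n$ is exactly the number of such tuples for which the product of degrees $n_1\cdots n_k$ equals $n$, where $n_i=\deg(\beta_i)\geq 2$.

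First I would count the choices for each factor. The affine group $A$ is parametrized by an invertible matrix together with a translation vector, so $|A|=|GL_2(\mathbb{F}_q)|\cdot q^2=q(q-1)^2(q+1)\cdot q^2=q^3(q-1)^2(q+1)$; this already yields $p_1=|A|$, corresponding to the empty product $k=0$. For $k\geq 1$ the factor $\alpha_1\in A_0$ may be either $\iota$ or $(y,x+ay)$, giving $q+1$ choices, whereas each of $\alpha_2,\ldots,\alpha_k$ is forced to be nontrivial and so contributes $q$ choices, for a total of $(q+1)q^{k-1}$. Each $\beta_i=(x+h_i(y),y)$ with $\deg h_i=n_i$ has a nonzero leading coefficient ($q-1$ choices) and $n_i-2$ free lower coefficients, giving $(q-1)q^{n_i-2}$ possibilities, hence $\prod_i (q-1)q^{n_i-2}=(q-1)^k q^{\,n_1+\cdots+n_k-2k}$ overall.

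Multiplying these counts together and grouping the powers of $q$ and of $q-1$ collapses the product to
\[
(q(q-1)(q+1))^2\left(\frac{q-1}{q}\right)^k q^{\,n_1+\cdots+n_k},
\]
and summing over all ordered factorizations $n=n_1\cdots n_k$ with $n_i\geq 2$ yields the formula in~(i). The only bookkeeping that requires care is the asymmetry between $\alpha_1$ (which may be trivial) and the remaining $\alpha_i$, since it is precisely this extra factor of $q+1$ versus $q$ that produces the clean exponent $k$ on $(q-1)/q$ after cancellation against $|A|$.

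For~(ii), observe that the coefficient of $n^{-s}$ in the $k$-th power $\rho^k(s)$ is exactly $\sum q^{n_1+\cdots+n_k}$ over ordered factorizations of $n$ into $k$ factors $\geq 2$. Substituting the formula of~(i), interchanging the formal summations over $n$ and over $k$, and recognizing the geometric series $\sum_{k\geq 0}\big(\tfrac{q-1}{q}\rho(s)\big)^k=\big(1-\tfrac{q-1}{q}\rho(s)\big)^{-1}$ produces the contribution of all $k\geq 1$. It remains to absorb the $n=1$ term: since $p_1=(q(q-1)(q+1))^2\cdot\tfrac{q}{q+1}$ and $\tfrac{q}{q+1}=1-\tfrac{1}{q+1}$, the $k=0$ term of the geometric series supplies the summand $1$, while the correction $-\tfrac{1}{q+1}$ exactly accounts for the discrepancy between the generic prefactor $(q(q-1)(q+1))^2$ and the true value of $p_1$. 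This reconciliation of the $n=1$ case with the closed form is the one genuinely non-mechanical point; everything else is routine assembly.
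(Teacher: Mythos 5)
Your proposal is correct and follows essentially the same route as the paper: counting tuples in the unique canonical form of Proposition~\ref{canonical form of automorphisms} with the identical factor counts $(q+1)q^{k-1}$, $(q-1)q^{n_i-2}$, and $|A|=q^3(q-1)^2(q+1)$, then summing the geometric series in $\frac{q-1}{q}\rho(s)$ and reconciling the $n=1$ term via $\frac{q}{q+1}=1-\frac{1}{q+1}$. No gaps; this matches the paper's own argument for both (i) and (ii).
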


\begin{proof}
(i) Applying Proposition \ref{canonical form of automorphisms},
$p_n$ is a sum on all ordered factorizations $n=n_1\cdots n_k$, $n_i>1$,
of the number of automorphisms of the form
\[
\varphi=(f,g)=\alpha_1^{\delta}\circ\beta_1\circ\alpha_2\circ\beta_2\circ
\cdots\circ\alpha_k\circ\beta_k\circ\lambda,
\]
where: $\alpha_i=(y,x+a_iy)$, $a_i\in{\mathbb F}_q$, $\delta$ is 1
or 0, depending on whether or not $\alpha_1$ participates in the
decomposition of $\varphi$; $\beta_i=(x+h_i(y)£¬ y)$, $h_i(y)\in
y^2{\mathbb F}_q[y]$, $\text{deg}(h_i)=n_i$; $\lambda$ is an affine
automorphism. We have $q+1=\vert{\mathbb F}_q\vert+1$ possibilities
for $\alpha_1^{\delta}$ and $q=\vert{\mathbb F}_q\vert$
possibilities for the other $\alpha_i$. The number of polynomials
$h_{n_i}(y)=h_{n_i,i}y^{n_i}+h_{n_i-1,i}y^{n_i-1}\cdots+h_{n_i,2}y^2$
of degree $n_i$ is $(q-1)q^{n_i-2}$ (because $h_{n_i,i}\not=0$).
Finally, the cardinality of the affine group is
\[
\vert A\vert=(q^2-1)(q^2-q)q^2=q^3(q-1)^2(q+1):
\]
If $\lambda=(a_1x+b_1y+c_1,a_2x+b_2y+c_2)$, then we have
$q^2-1$ possibilities for the nonzero element $a_1x+b_1y$,
$q^2-q$ possibilities
for $a_2x+b_2y$ which is linearly independent with
$a_1x+b_1y$ and $q^2$ possibilities for $c_1,c_2$.
Hence, for $n=1$
\[
p_1=\vert A\vert=q^3(q-1)^2(q+1),
\]
\[
p_n=\sum_{n_1\cdots n_k=n}
(q+1)q^{k-1}\left(\prod_{i=1}^k(q-1)q^{n_i-2}\right)q^3(q-1)^2(q+1)
\]
\[
=(q+1)^2\sum_{n_1\cdots n_k=n}(q-1)^{k+2}q^{n_1+\cdots+n_k-k+2}
\]
\[
=(q(q-1)(q+1))^2\sum_{n_1\cdots n_k=n}\left(\frac{q-1}{q}\right)^kq^{n_1+\cdots+n_k}
\]
when $n>1$.

(ii) If $a_n$, $b_n$, $n=1,2,\ldots$, are two sequences, then, see e.g. \cite{W},
the product of their Dirichlet series generating functions $a(s)$, $b(s)$ is
\[
a(s)b(s)=\left(\sum_{n\geq 1}\frac{a_n}{n^s}\right)
\left(\sum_{n\geq 1}\frac{b_n}{n^s}\right)
=\sum_{n\geq 1}\left(\sum_{i=1}^{n-1}a_ib_{n-i}\right)\frac{1}{n^s}.
\]
Applied to $\rho^k(s)$ this gives
\[
\rho^k(s)=\sum_{n\geq 2}\left(\sum_{n_1\cdots n_k=n}q^{n_1+\cdots+n_k}\right)
\frac{1}{n^s}.
\]
Hence
\[
\sum_{n\geq 2}\frac{p_n}{n^s}
=(q(q-1)(q+1))^2\sum_{k\geq 1}\left(\frac{q-1}{q}\right)^k\left(\sum_{n\geq 2}
\sum_{n_1\cdots n_k=n}q^{n_1+\cdots+n_k}
\frac{1}{n^s}\right)
\]
\[
=(q(q-1)(q+1))^2\sum_{k\geq 1}\left(\frac{q-1}{q}\right)^k\rho^k(s),
\]
\[
p(s)=\sum_{n\geq 1}\frac{p_n}{n^s}
=p_1+\sum_{n\geq 2}\frac{p_n}{n^s}
\]
\[
=(q(q-1)(q+1))^2\left(-\frac{1}{q+1}+1+\sum_{k\geq 1}\left(\frac{q-1}{q}\right)^k\rho^k(s)\right)
\]
\[
=(q(q-1)(q+1))^2\left(\sum_{k\geq 0}\left(\frac{q-1}{q}\right)^k\rho^k(s)-\frac{1}{q+1}\right)
\]
\[
=(q(q-1)(q+1))^2\left(\frac{1}{1-\frac{q-1}{q}\rho(s)}-\frac{1}{q+1}\right).
\]
\end{proof}

As a consequence of the above theorem, we also give a formula for
the number of coordinates with degree $n$ in ${\mathbb F}_q[x,y]$
and the corresponding Dirichlet series generating function.

\begin{theorem}\label{number of coordinates}
{\rm (i)} The number $l_n$ of coordinates  with degree $n$ in
${\mathbb F}_q[x,y]$, is given by the formulas
\[
l_1=(q-1)q(q+1),
\]
\[
l_n=\frac{p_n}{(q-1)q(q+1)}=q(q-1)(q+1)\sum
\left(\frac{q-1}{q}\right)^kq^{n_1+\cdots+n_k}, \quad n>1,
\]
where the summation takes on all ordered factorizations $n=n_1\cdots
n_k$ of $n$, with $n_1,\ldots,n_k>1$.

{\rm (ii)} The Dirichlet series generating function $l(s)$ of the
sequence $l_n$, $n=1,2,\ldots$, is
\[
l(s)=\sum_{n\geq 1}\frac{l_n}{n^s}
=q(q-1)(q+1)\left(\frac{1}{1-\frac{q-1}{q}\rho(s)}-\frac{1}{q+1}\right),
\]
where
\[
\rho(s)=\sum_{n\geq 2}\frac{q^n}{n^s}.
\]
\end{theorem}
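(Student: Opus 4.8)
The plan is to deduce Theorem~\ref{number of coordinates} from Theorem~\ref{explicit formula and Dirichlet series} by relating a coordinate to the automorphisms having it as first component. By definition a coordinate is a polynomial of the form $f=\varphi(x)$ for some automorphism $\varphi=(f,g)$, so the coordinates of degree $n$ are exactly the degree-$n$ elements of the orbit of $x$. Two automorphisms share the same first component precisely when they lie in one left coset of the stabilizer $\mathrm{Stab}(x)=\{(x,\alpha y+\beta(x))\mid\alpha\in{\mathbb F}_q^{*},\ \beta\in{\mathbb F}_q[x]\}$, because $\varphi(x)=\varphi'(x)$ is equivalent to $\varphi'=\varphi\circ\theta$ with $\theta(x)=x$. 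Counting degree-$n$ coordinates thus amounts to counting such cosets that meet the degree-$n$ stratum, and I would organize this through the canonical form of Proposition~\ref{canonical form of automorphisms}.

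For part~(i) the key observation is that, writing $\varphi=\alpha_1^{\delta}\circ\beta_1\circ\cdots\circ\alpha_k\circ\beta_k\circ\lambda$ and $\Xi=\alpha_1^{\delta}\circ\beta_1\circ\cdots\circ\beta_k$, one has $f=\varphi(x)=\Xi(\lambda(x))=a_1\Xi(x)+b_1\Xi(y)+c_1$, so $f$ depends on the affine tail $\lambda=(a_1x+b_1y+c_1,\,a_2x+b_2y+c_2)$ only through its first row $(a_1,b_1,c_1)$; the second row never enters. The leading-term computation in the proof of Proposition~\ref{canonical form of automorphisms} shows that the top homogeneous part of $f$ is a nonzero scalar multiple of $a_1w^{n}$, so $\deg f=n$ if and only if $a_1\neq0$. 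I would then show that the assignment sending each admissible tuple $(\alpha_1^{\delta},\beta_1,\ldots,\alpha_k,\beta_k,(a_1,b_1,c_1))$ with $a_1\neq0$ to its coordinate $f$ is a bijection onto the degree-$n$ coordinates. Granting this, one repeats the enumeration in Theorem~\ref{explicit formula and Dirichlet series}(i) verbatim, the only change being that the factor $|A|=q^{3}(q-1)^{2}(q+1)$ counting $\lambda$ is replaced by the number $(q-1)q^{2}$ of first rows with $a_1\neq0$. Since $|A|/((q-1)q^{2})=q(q-1)(q+1)$, this yields $l_n=p_n/(q(q-1)(q+1))$ for $n>1$. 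The degenerate case $n=1$ has no $\beta_i$ and is counted directly: a degree-$1$ coordinate is any $a_1x+b_1y+c_1$ with $(a_1,b_1)\neq(0,0)$, giving $l_1=(q^{2}-1)q=(q-1)q(q+1)$.

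The main obstacle is the injectivity of this assignment, and it is where I would spend the most care. Suppose two admissible tuples complete (by choosing second rows) to canonical-form automorphisms $\varphi,\varphi'$, both automatically of degree $n$, with $\varphi(x)=\varphi'(x)$; then $\varphi'=\varphi\circ\theta$ for some $\theta=(x,\alpha y+\beta(x))\in\mathrm{Stab}(x)$. Since $\deg\varphi(x)=n$ and $\deg\varphi'=n$, the component $\varphi'(y)=\alpha\,\varphi(y)+\beta(\varphi(x))$ has degree at most $n$; as $\deg\varphi(y)\le n$ while $\deg\beta(\varphi(x))=n\deg\beta$, this forces $\deg\beta\le1$, i.e. $\theta=(x,\alpha y+c_1x+c_0)$ is affine. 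Hence $\varphi\circ\theta=\alpha_1^{\delta}\circ\beta_1\circ\cdots\circ\beta_k\circ(\lambda\circ\theta)$ is again in canonical form with the same $\alpha_i,\beta_i$, and since $\theta(x)=x$ we have $(\lambda\circ\theta)(x)=\lambda(x)$, so its affine tail has the same first row as that of $\varphi$. By the uniqueness in Proposition~\ref{canonical form of automorphisms} (via Lemma~\ref{description of free product}) this must be the canonical form determined by $\varphi'$, whence the two reduced tuples agree. Surjectivity is routine: a degree-$n$ coordinate $f$ always admits a completion $(f,g)$ with $\deg g\le n$ (reduce $g$ modulo ${\mathbb F}_q[f]$ exactly as in the proof of Proposition~\ref{canonical form of automorphisms}), and the canonical form of this automorphism has $a_1\neq0$ and first component $f$.

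For part~(ii) I would assemble $l(s)$ from the relation in~(i). Inside the proof of Theorem~\ref{explicit formula and Dirichlet series}(ii) one finds $\sum_{n\ge2}p_n/n^{s}=(q(q-1)(q+1))^{2}\sum_{k\ge1}\big(\tfrac{q-1}{q}\big)^{k}\rho^{k}(s)$; dividing by $q(q-1)(q+1)$ and using $l_n=p_n/(q(q-1)(q+1))$ for $n>1$ gives $\sum_{n\ge2}l_n/n^{s}=q(q-1)(q+1)\sum_{k\ge1}\big(\tfrac{q-1}{q}\big)^{k}\rho^{k}(s)$. It then remains to add the constant term $l_1$ and collapse the geometric series. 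The delicate point, which I would flag explicitly, is precisely this $n=1$ term: the identity $l_n=p_n/(q(q-1)(q+1))$ holds only for $n>1$, so the constant term must be taken from the direct count $l_1=(q-1)q(q+1)$ rather than inherited from $p(s)$. Since this value coincides exactly with the $k=0$ term $q(q-1)(q+1)$ of the same geometric series, adding it completes the sum over $k\ge0$ and produces $l(s)=q(q-1)(q+1)\big(1-\tfrac{q-1}{q}\rho(s)\big)^{-1}$; I would note that the $n=1$ correction enters here differently from the way it does for $p(s)$, precisely because $l_1$ already matches the $k=0$ term whereas $p_1$ does not.
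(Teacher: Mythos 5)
Your proof of part (i) is correct, but it follows a genuinely different route from the paper's. The paper argues in two steps: by Jung--van der Kulk, a coordinate $f$ with $\deg(f)=n>1$ has exactly $(q-1)q$ mates $g$ with $\deg(g)<\deg(f)$ (any two differing by $g_1=cg+d$), so $l_n=z_n/((q-1)q)$ where $z_n$ is the number of automorphisms with $\deg(f)=n>\deg(g)$; it then claims $z_n=p_n/(q+1)$ by restricting one factor in the enumeration behind Theorem~\ref{explicit formula and Dirichlet series}. You instead parametrize the coordinates themselves: $f=a_1\Xi(x)+b_1\Xi(y)+c_1$ depends only on the first row $(a_1,b_1,c_1)$ of the affine tail, $\deg(f)=n$ exactly when $a_1\neq0$, and the truncated canonical forms biject with the degree-$n$ coordinates, with injectivity coming from $\mathrm{Stab}(x)$ plus uniqueness of the canonical form and the count coming from the replacement $|A|\mapsto(q-1)q^2$. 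Your route also locates the degree condition in the right place: under the composition convention the paper fixes, the leading forms computed in Proposition~\ref{canonical form of automorphisms} are proportional to $(a_1,a_2)$, the coefficients of $x$ in $\lambda$, independently of $\delta$; so the restriction defining $z_n$ is $a_2=0$, not $\delta=0$ as the paper's proof asserts --- both subsets happen to have size $p_n/(q+1)$, so the paper's formula survives, but your bijection is the cleaner justification. The two facts you leave as standard (the description of $\mathrm{Stab}(x)$, and the existence of a mate of degree $\leq n$ for every degree-$n$ coordinate, which is your surjectivity) are exactly the Jung--van der Kulk inputs the paper also uses without proof, so the level of rigor is comparable.

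On part (ii) you have uncovered a genuine error in the statement, and your formula is the correct one. The stated (ii) is inconsistent with the stated (i): since $\rho^k(s)$ has no $n=1$ term for $k\geq1$, the coefficient of $1/1^s$ on the right-hand side of the stated (ii) is $q(q-1)(q+1)\left(1-\frac{1}{q+1}\right)=q^2(q-1)$, whereas (i) gives, by direct count, $l_1=q(q-1)(q+1)$. The subtraction of $\frac{1}{q+1}$ is needed for $p(s)$ because $p_1=(q(q-1)(q+1))^2\cdot\frac{q}{q+1}$ falls short of the $k=0$ term of the geometric series, but $l_1$ equals its $k=0$ term exactly, so no correction should appear and
\[
l(s)=\frac{q(q-1)(q+1)}{1-\frac{q-1}{q}\rho(s)},
\]
as you derive and as you correctly explain. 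The paper's proof of (ii) says only that it follows ``by similar calculation'' to Theorem~\ref{explicit formula and Dirichlet series}(ii); carrying that calculation out, as you did, shows that the correction term $-\frac{1}{q+1}$ was copied over from $p(s)$ where it does not belong.
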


\begin{proof}
{\rm (i)}  According to the well-known theorem of Jung-van der Kulk
\cite{J, K},  for a coordinate $f\in K[x,y]$ with $\deg(f)>1$, two
automorphisms $(f,g)$ and $(f,g_1)$ with $\deg(g)<\deg(f)$ and
$\deg(g_1)<\deg(f)$ if and only if $g_1=cg+d$ where $c\in K-\{0\}$,\
$d\in K$ (so $(c,d)$ has $(q-1)q$ choices),  hence for a fixed
coordinate $f\in\mathbb F_q[x,y]$ with $\deg(f)>1$, there are
$(q-1)q$ automorphisms $(f,g)$ with $\deg(g)<\deg(f)$, therefore
$l_n=\frac{z_n}{(q-1)q}$, where $z_n$ is the number of automorphisms
$(f,g)$ with $\deg(f)=n>\deg(g)$.

Now we can determine $z_n$ as follows. Tracing back to the proof of
Theorem \ref{explicit formula and Dirichlet series}, we can see in
the current case $\alpha_1^{\delta}=(y,x+a_1y)^{\delta}$ only has
one choice (i.e. $\delta=0$) instead of $(q+1)$ choices in the
decomposition as we have $\deg(f)>\deg(g)$ now, so
$z_n=\frac{p_n}{q+1}$. Therefore,

$$l_n=\frac{z_n}{(q-1)q}
=\frac{1}{(q-1)q(q+1)}p_n$$

$$=q(q-1)(q+1)\sum
\left(\frac{q-1}{q}\right)^kq^{n_1+\cdots+n_k}.$$

 When $n=1$, for coordinates $ax+by+c\ (a, b, c\in {\mathbb F}_q,\  (a,b)\ne (0,0))$,\ we have $(q-1)q$ choices
 for $(a, b)$,\ $q$ choices for $c$, hence
$$l_1=q(q^2-1)=(q-1)q(q+1).$$

{\rm (ii)} It follows from {\rm (i)}, and  by  similar calculation
in the proof of Theorem \ref{explicit formula and Dirichlet series}
{\rm (ii)}.
\end{proof}

The following theorem gives an estimate for the growth of $p_n$.

\begin{theorem}\label{estimate for number of automorphisms}
For $n\geq 2$ the number of automorphisms of degree $n$ of ${\mathbb F}_q[x,y]$
satisfies the inequalities
\[
(q-1)^3(q+1)^2q^{n+1}\leq p_n\leq (q-1)^3(q+1)^2q^{n+1}+(\log_2n)^{\log_2n}q^{n/2+8}.
\]
For a fixed $q$ and any $\varepsilon>0$,
\[
p_n=(q-1)^3(q+1)^2q^{n+1}+{\mathcal O}(q^{n(1/2+\varepsilon)}).
\]
\end{theorem}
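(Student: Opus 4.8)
The plan is to read the estimate directly off the exact formula of Theorem \ref{explicit formula and Dirichlet series}(i). Writing $C=(q(q-1)(q+1))^2$, that formula presents $p_n$ as a sum over ordered factorizations $n=n_1\cdots n_k$ with all $n_i\ge 2$. The first step is to isolate the single trivial factorization $k=1$, $n_1=n$ (permissible since $n\ge 2$), whose contribution is
\[
C\cdot\frac{q-1}{q}\cdot q^{n}=q^{2}(q-1)^{2}(q+1)^{2}\cdot (q-1)q^{n-1}=(q-1)^3(q+1)^2q^{n+1},
\]
exactly the claimed main term. Every remaining summand (those with $k\ge 2$) is positive, so the lower bound $p_n\ge (q-1)^3(q+1)^2q^{n+1}$ is immediate, and it remains only to bound the tail $R_n:=p_n-(q-1)^3(q+1)^2q^{n+1}$ from above.

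For the upper bound I would prove two elementary facts about factorizations with $k\ge 2$ factors, all $\ge 2$. The first controls the exponent: from $(a-2)(b-2)\ge 0$ one gets $a+b\le \tfrac12 ab+2$ for $a,b\ge 2$, and an easy induction on $k$ (group the last two factors, using $n_{k-1}+n_k\le n_{k-1}n_k$) upgrades this to
\[
n_1+\cdots+n_k\le \frac{n}{2}+2\qquad(k\ge 2),
\]
so that $q^{n_1+\cdots+n_k}\le q^{n/2+2}$, while the weight obeys $\left(\frac{q-1}{q}\right)^k\le 1$. The second fact counts the factorizations: since each factor is $\ge 2$ there are at most $\Omega(n)\le \log_2 n$ of them, and a crude count (via the ordered Bell numbers, or by assigning the at most $\log_2 n$ prime divisors, counted with multiplicity, to ordered blocks) shows the total number of ordered factorizations of $n$ into factors $\ge 2$ is at most $(\log_2 n)^{\log_2 n}$. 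Combining these with $C=q^{2}(q-1)^{2}(q+1)^{2}\le q^{6}$ yields
\[
R_n\le C\,q^{n/2+2}\,(\log_2 n)^{\log_2 n}\le (\log_2 n)^{\log_2 n}\,q^{n/2+8},
\]
which is the asserted upper estimate.

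The asymptotic formula then follows by checking that the tail is $q^{n/2}$ times a sub-exponential factor. Indeed $(\log_2 n)^{\log_2 n}=q^{(\log_2 n)\log_q(\log_2 n)}$, and the exponent $(\log_2 n)\log_q(\log_2 n)$ is $o(n)$; hence for every fixed $\varepsilon>0$ and all large $n$ one has $(\log_2 n)^{\log_2 n}q^{n/2+8}\le q^{n(1/2+\varepsilon)}$. Together with the two-sided bound this gives $p_n=(q-1)^3(q+1)^2q^{n+1}+\mathcal O(q^{n(1/2+\varepsilon)})$.

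I expect the only real obstacle to lie in the second estimate of the middle step: one must bound simultaneously the number of ordered factorizations and the size $q^{n_1+\cdots+n_k}$ of each summand, and then keep every stray constant and logarithmic factor inside the generous envelope $(\log_2 n)^{\log_2 n}q^{n/2+8}$. The exponent bound $n_1+\cdots+n_k\le n/2+2$ is sharp (attained by $n=2\cdot(n/2)$), so the dominant part of $R_n$ genuinely comes from the $k=2$ factorizations; the delicate point is to make the factorization count crude enough to state cleanly yet still small enough to be swallowed by the slack in $q^{8}$ (rather than the $q^{4}$ coming from $C$ alone).
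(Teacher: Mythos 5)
Your proposal is correct and follows the paper's overall strategy step for step: isolate the $k=1$ factorization as the main term $(q-1)^3(q+1)^2q^{n+1}$ (giving the lower bound for free), bound the tail by (number of ordered factorizations with $k\ge 2$) times (largest summand), count the factorizations by assigning the at most $\log_2 n$ prime factors, with multiplicity, to ordered slots to get the bound $(\log_2 n)^{\log_2 n}$, reduce the summand bound to the inequality $n_1+\cdots+n_k\le \frac{n}{2}+2$ while absorbing $(q(q-1)(q+1))^2<q^6$ and $\left(\frac{q-1}{q}\right)^k<1$ into $q^{n/2+8}$, and finish the asymptotics by noting $(\log_2 n)^{\log_2 n}=q^{o(n)}$. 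The one place you genuinely diverge is the proof of the exponent inequality: the paper treats $u(t_1,\ldots,t_{k-1})=t_1+\cdots+t_{k-1}+\frac{n}{t_1\cdots t_{k-1}}$ as a real function, argues via $\partial u/\partial t_1\le 0$ that its maximum on the relevant region occurs at $t_1=2$, and then inducts on $k$; your argument is purely algebraic --- the base case $a+b\le \frac{1}{2}ab+2$ from $(a-2)(b-2)\ge 0$, and the inductive step by merging the last two factors via $n_{k-1}+n_k\le n_{k-1}n_k$, which keeps all factors $\ge 2$ and the product equal to $n$. Your version is cleaner and entirely avoids the calculus (the paper's optimization step is also marred by typographical slips), while yielding the same sharp bound, attained at $n=2\cdot\frac{n}{2}$; everything else, including the constant bookkeeping in $q^{n/2+8}$ and the sub-exponential estimate of the factorization count, matches the paper's proof.
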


\begin{proof}
By Theorem \ref{explicit formula and Dirichlet series} (i),
for $n\geq 2$
\[
p_n=(q(q-1)(q+1))^2\sum \left(\frac{q-1}{q}\right)^kq^{n_1+\cdots+n_k},
\]
where the summation is on all ordered factorizations $n=n_1\cdots n_k$
of $n$, with $n_1,\ldots,n_k>1$. For $k=1$ we obtain the summand
$(q-1)^3(q+1)^2q^{n+1}$. Hence it is sufficient to show that
for the number $f(n)$ of ordered factorizations $n=n_1\cdots n_k$ with $k\geq 2$
satisfies
\[
f(n)\leq (\log_2n)^{\log_2n}={\mathcal O}(n^{\varepsilon}),
\]
and for $k\geq 2$
\[
(q(q-1)(q+1))^2\left(\frac{q-1}{q}\right)^kq^{n_1+\cdots+n_k}\leq q^{n/2+8}.
\]
If $n=p_1\cdots p_m$ is the factorization of $n$ in primes, then
the number $k$ in the ordered factorizations is bounded by $m$. The
number of factorizations $f(n)$ is bounded by the
number of factorizations $n=n_1\cdots n_m$ in $m$ parts,
allowing $n_i=1$ for some $i$. Hence $f(n)\leq m^m$
(because each $p_j$ may participate as a factor of any $n_i$). Since
\[
2^m\leq p_1\cdots p_m=n,\quad m\leq\log_2n,
\]
we derive the inequality for $f(n)$. Let $\varepsilon>0$.
Writing $n$ in the form $n=2^t$, $t=\log_2n$,
we obtain $t^t=\log_2n^{\log_2n}$. For $a>1$ and for $n$
sufficiently large (and hence $t$ sufficiently large)
\[
\log_af(n)\leq\log_a(t^t)=t\log_at
\]
\[
\leq t^2\leq 2^t\varepsilon\log_aq
=\log_a\left(q^{2^t\varepsilon}\right)=\log_a(q^{n\varepsilon})
\]
which gives the estimate for $f(n)$.
For the second inequality, we have
\[
(q(q-1)(q+1))^2=q^2(q^2-1)^2<q^6,\quad \frac{q-1}{q}<1,
\]
hence we have to show that
\[
n_1+\cdots+n_k\leq \frac{n}{2}+2
\]
for $n=n_1\cdots n_k$, where $n_i\geq 2$ and $k\geq 2$.
We consider the function
\[
u(t_1,\ldots,t_{k-1})=t_1+\cdots+t_{k-1}+\frac{n}{t_1\cdots n_{k-1}},
\]
\[
2\leq t_1\leq \cdots\leq t_{k-1}\leq t_k=\frac{n}{t_1\cdots t_{k-1}}.
\]
Hence
\[
t_1(t_1\cdots t_{k-1})\leq n,
\]
\[
\frac{\partial u}{\partial t_1}=1-\frac{n}{t_1(t_1\cdots t_{k-1})}\leq 0
\]
with $\partial u/\partial t_1=0$ for $t_1=\cdots=t_{k-1}=n/k$ only.
Considered as a function of $t_1$, the function $u(t_1,\ldots,u_{k-1})$
decreases for $t_1\in [2,t_2]$
and has its maximal value for $t_1=2$.
Hence
\[
u(t_1,\ldots,t_{k-1})=t_1+t_2+\cdots+t_k\leq 2+(t_2+\cdots+t_k),\quad
t_2\cdots t_k=\frac{n}{2}.
\]
If $k=2$ we already have
\[
t_1+t_2\leq \frac{n}{2}+2.
\]
For $k\geq 3$ we have $n\geq 8$, $n/4\geq 2$ and by induction
\[
t_2+\cdots+t_k\leq\frac{n}{2\cdot 2}+2=\frac{n}{4}+2,
\]
\[
t_1+(t_2+\cdots+t_k)\leq 2+\left(\frac{n}{4}+2\right)
\leq \frac{n}{4}+\left(\frac{n}{4}+2\right)=\frac{n}{2}+2.
\]
\end{proof}

The main contribution
$(q-1)^3(q+1)^2q^{n+1}$ to $p_n$ comes from
the case when $k=1$.
By Proposition
\ref{canonical form of automorphisms} this means that
such automorphisms are of the form
\[
\varphi=(f,g)=\alpha\circ\beta\circ\lambda,
\]
where $\alpha=\iota=(x,y)$ or $\alpha=(y,x+ay)$, $a\in {\mathbb F}_q$,
$\beta=(x+h(y),y)$,
$h(y)\in y^2{\mathbb F}_q[y]$ is a polynomial of degree $n$ and $\lambda$ is an affine
automorphism, i.e., $\varphi$ has the form from the introduction.

\

\section{Free Nielsen -- Schreier algebras}

Recall that a variety of algebras over a field $K$ is the class of
all (maybe nonassociative) algebras satisfying a given system of
polynomial identities. Examples of varieties are the classes of all
commutative-associative algebras, all associative algebras, all Lie
algebras, all nonassociative algebras, etc. The variety satisfies
the Nielsen -- Schreier property if the subalgebras of its free
algebras are free in the same variety. See, for instance,
\cite{MSY}. If a Nielsen -- Schreier variety is defined by a
homogeneous (with respect to each variable) system of polynomial
identities, then the automorphisms of the finitely generated free
algebras are tame, see Lewin \cite{L}. We have the following
analogue of Theorem \ref{explicit formula and Dirichlet series}:

\begin{theorem}\label{number of automorphisms for free algebras}
Let $F(x,y)$ be the free ${\mathbb F}_q$-algebra with two generators
in a Nielsen -- Schreier variety defined by a homogeneous system of
polynomial identities and let $F(x,y)\not=0$.
Let $c_n$ be the dimension of all homogeneous polynomials
$u(x)$ in one variable of degree $n$ in $F(x,y)$.

{\rm (i)} The number $p_n$ of automorphisms $\varphi=(f,g)$ of degree $n$ of
$F(x,y)$ is given by the formulas
\[
p_1=q^3(q-1)^2(q+1)\quad \text{for unitary algebras},
\]
\[
p_1=q(q-1)^2(q+1)\quad \text{for nonunitary algebras},
\]
\[
p_n=p_1(q+1)\sum q^{k-1}\prod_{i=1}^k\left((q^{c_{n_i}}-1)q^{c_2+\cdots+c_{n_i-1}}\right),
\quad n>1,
\]
where the summation is on all ordered factorizations $n=n_1\cdots n_k$
of $n$, with $n_1,\ldots,n_k>1$.

{\rm (ii)} The Dirichlet series generating function $p(s)$ of the sequence
$p_n$, $n=1,2,\ldots$, is
\[
p(s)=\frac{p_1}{q}\left(\frac{q+1}{1-q\sigma(s)}-1\right),
\]
where
\[
\sigma(s)=\sum_{n\geq 2}(q^{c_n}-1)q^{c_2+\cdots+c_{n-1}}\frac{1}{n^s}.
\]
\end{theorem}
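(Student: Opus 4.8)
The plan is to mirror the structure of the proof of Theorem \ref{explicit formula and Dirichlet series}, replacing the polynomial-degree count by a dimension count governed by the numbers $c_n$. The theorem of Lewin guarantees tameness, and a Nielsen--Schreier analogue of Proposition \ref{canonical form of automorphisms} should give every automorphism $\varphi$ of degree $n>1$ a unique decomposition
\[
\varphi=\alpha_1^{\delta}\circ\beta_1\circ\alpha_2\circ\cdots\circ\alpha_k\circ\beta_k\circ\lambda,
\]
with the $\beta_i=(x+h_i(y),y)$ now triangular with $h_i$ ranging over the \emph{homogeneous-free} elements of the appropriate degree in the free algebra $F(x,y)$, and with $\deg(\varphi)=n_1\cdots n_k$ where $n_i=\deg(h_i)$. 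The degree multiplicativity carries over because substitution of a degree-$n$ homogeneous leading part into a degree-$m$ leading part produces a degree-$mn$ leading part, exactly as in the associative-commutative case.

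First I would carry out the counting of the factors in the canonical form. The affine group and the $\alpha_i$ contribute the same factors as before: $q+1$ choices for $\alpha_1^{\delta}$, $q$ choices for each remaining $\alpha_i$ (giving $(q+1)q^{k-1}$), and $|A|=q^3(q-1)^2(q+1)=p_1$ for $\lambda$ in the unitary case (with $p_1=q(q-1)^2(q+1)$ in the nonunitary case, where translations are absent). The key new ingredient is the count of admissible $\beta_i$ of degree $n_i$: the relevant $h_i$ lives in the span of the homogeneous components of degrees $2,\ldots,n_i$ of the one-variable subalgebra, which has total dimension $c_2+\cdots+c_{n_i}$, subject to the leading degree-$n_i$ component being nonzero. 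This yields $(q^{c_{n_i}}-1)q^{c_2+\cdots+c_{n_i-1}}$ choices, the number of vectors in an $F_q$-space of dimension $c_2+\cdots+c_{n_i}$ whose projection onto the top $c_{n_i}$-dimensional piece is nonzero. Multiplying these factors over $i$ and summing over all ordered factorizations $n=n_1\cdots n_k$ with $n_i>1$ gives precisely the stated formula for $p_n$.

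For part (ii) I would set $\sigma(s)=\sum_{n\geq 2}(q^{c_n}-1)q^{c_2+\cdots+c_{n-1}}n^{-s}$ and observe, via the Dirichlet-product rule quoted in the earlier proof, that the coefficient of $n^{-s}$ in $\sigma^k(s)$ is the sum over ordered factorizations $n=n_1\cdots n_k$ of $\prod_i (q^{c_{n_i}}-1)q^{c_2+\cdots+c_{n_i-1}}$. Hence
\[
\sum_{n\geq 2}\frac{p_n}{n^s}=p_1(q+1)\sum_{k\geq 1}q^{k-1}\sigma^k(s)
=\frac{p_1(q+1)}{q}\cdot\frac{q\sigma(s)}{1-q\sigma(s)},
\]
and adding the $n=1$ term $p_1$ and simplifying produces the closed form $\dfrac{p_1}{q}\left(\dfrac{q+1}{1-q\sigma(s)}-1\right)$.

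The main obstacle I anticipate is establishing the Nielsen--Schreier version of Proposition \ref{canonical form of automorphisms}: one must verify that $\mathrm{Aut}(F(x,y))$ again decomposes as a free product with amalgamation $A\ast_C B$ with the same coset representatives, and that the one-variable subalgebra generated by $y$ in $F(x,y)$ is free on one generator so that its homogeneous components have the well-defined dimensions $c_n$. The homogeneity hypothesis on the defining identities is exactly what makes the leading-term analysis and the induction on degree go through, so the care lies in checking that Lewin's tameness theorem plus homogeneity yield the required canonical form with the degree being multiplicative; once that is in place, the counting and the Dirichlet-series manipulation are routine adaptations of the earlier arguments.
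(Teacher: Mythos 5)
Your proposal is correct and follows essentially the same route as the paper: invoke tameness (Lewin / the Nielsen--Schreier property) to get the amalgamated free product structure $A\ast_C B$ and the analogue of Proposition \ref{canonical form of automorphisms}, count $(q+1)q^{k-1}$ choices for the linear factors, $p_1$ choices for $\lambda$, and $(q^{c_{n_i}}-1)q^{c_2+\cdots+c_{n_i-1}}$ choices for each triangular factor, then sum the geometric series of Dirichlet powers $\sigma^k(s)$. Your algebraic simplification to $\frac{p_1}{q}\bigl(\frac{q+1}{1-q\sigma(s)}-1\bigr)$ checks out, and the point you flag as the main obstacle (the canonical form with multiplicative degree) is exactly the step the paper also treats briefly, justifying it by the fact that algebraically dependent homogeneous elements in such free algebras have one expressible as a polynomial in the others.
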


\begin{proof}
One of the important properties of free algebras in
Nielsen -- Schreier varieties defined by homogeneous polynomial
identities is the following. If several homogeneous elements in the free algebra
are algebraically dependent, then one of them is a polynomial of the others.
This fact implies that the automorphisms of finitely generated free algebras are tame.
Applied to the free algebra $F(x,y)$ with two generators, this gives that
$\text{Aut}(F(x,y))=A\ast_CB$, where $A$ is the affine group
if we consider unitary algebras and the general linear group when we
allow nonunitary algebras, $B$ is the group of triangular automorphisms and $C=A\cap B$.
Hence we have an analogue of Proposition \ref{canonical form of automorphisms}.
Counting the elements of degree $n$ in $\text{Aut}(F(x,y))$ we obtain that
$p_1$ is the number $q^3(q-1)^2(q+1)$ of elements of the affine group
if the algebra $F(x,y)$ has 1 and the number $q(q-1)^2(q+1)$ of elements
of the general linear group $GL_2({\mathbb F}_q)$ for
nonunitary algebras. Then the proof follows the steps of the proof of Theorem
\ref{explicit formula and Dirichlet series}, taking into account that
the number of polynomials $h(y)=h_ny^n+\cdots+h_2y^2$ of degree $n$ is
$(q^{c_n}-1)q^{c_{n-1}}\cdots q^{c_2}$.
\end{proof}

The free Lie algebra and the free anti-commutative algebra in two variables have no
elements in one variable of degree $>1$ and all automorphisms are linear.
Hence
\[
p_1=q(q-1)^2(q+1),\quad p_n=0,\quad n>1.
\]
For the free nonassociative algebra the number $c_n$ is equal to the number of
nonassociative and noncommutative monomials in one variable, or to the Catalan number and
\[
c_n=\frac{1}{n}\binom{2n-2}{n-1},\quad n\geq 1.
\]
No explicit expression is known for the number $c_n$
of nonassociative commutative monomials of degree $n$.

\

\section*{Acknowledgements}

The authors would like to thank Arnaud Bodin for bringing their
attention to the problem and for helpful comments and suggestions.

\

\end{document}